\newtheorem{theorem}{Theorem}[section]
\theoremstyle{plain}
\newtheorem{corollary}[theorem]{Corollary}
\newtheorem{definition}[theorem]{Definition}
\newtheorem{lemma}[theorem]{Lemma}
\newtheorem{proposition}[theorem]{Proposition}
\numberwithin{equation}{section}
\theoremstyle{definition}
\DeclareMathOperator{\Hom}{Hom}
\DeclareMathOperator{\im}{im}
\DeclareMathOperator{\coker}{coker}
\DeclareMathOperator{\id}{id}
\newcommand{\Bl}{{\mathcal{B\ell}}}
\newcommand{\A}{\mathcal{A}}
\newcommand{\K}{\mathcal{K}}
\newcommand{\R}{\mathcal{R}}
\newcommand{\Q}{\mathbb{Q}}
\newcommand{\Z}{\mathbb{Z}}
\begin{document}
\title[Higher-Order Linking Forms for 3-Manifolds]{Higher-Order Linking Forms for
3-Manifolds}
\author{Constance Leidy$^{\dag}$\\
Wesleyan University}
\address{Wesleyan University, 655 Exley, 265 Church Street, Middletown, CT 06459}
\email{cleidy@wesleyan.edu}
\thanks{$^{\dag}$Partially supported by NSF DMS-1105776}
\subjclass[2010]{57M27}

\begin{abstract}
Given a closed, oriented, connected 3-manifold, $M$, we define higher-order
linking forms on the higher-order Alexander modules of $M$. These higher-order linking forms generalize similar linking forms for knots previously studied by the author, which were themselves generalizations of the classical Blanchfield linking form for a knot. We also investigate the effect of the construction known as ``infection by a knot'' on these linking forms.
\end{abstract}

\keywords{3-manifold; linking form; Blanchfield; infection}

\maketitle

\section{Introduction}

We define linking forms, $\Bl_\R(M)$, associated to any closed, oriented, connected 3-manifold, $M$, and any Ore domain, $\R$, such that $\Z\Gamma \subset \R \subset \K\Gamma$ where $\phi:\pi_1(M) \to \Gamma$ is a coefficient system, such that $\Gamma$ is poly-torsion-free-abelian. Such linking forms have been used in a number of papers (see \cite{CHLlink}, \cite{CHLknot}, \cite{CHLderivatives}, \cite{CHLtorsion}, and \cite{CHLprimary}). However, the technical definitions and properties of them (particularly, the effect of infection on them) have not previously appeared in the literature.

Higher-order Alexander modules and higher-order linking forms for knots
and for closed 3-manifolds with $\beta_{1}(M)=1$ were introduced in \cite{cot1} and further
developed in \cite{nckt} and \cite{leidy}. Higher-order Alexander modules for 3-manifolds in general were defined and investigated in \cite{harvey}. In Section \ref{defsection}, we define higher-order linking forms for 3-manifolds which are defined on these higher-order Alexander module.

It should be pointed out that the coefficients that we consider are more general than those used in much of the previous related work. First of all, we allow our coefficients to be \emph{unlocalized}. In particular, the modules on which our linking forms are defined might \emph{not} have homological dimension 1 and the forms themselves might be \emph{singular}. This differs from much of the previous work (for instance, \cite{cot1} and \cite{cot2}) where the primary focus of study was over coefficients that were localized in order to obtain a principal ideal domain. Moreover, we allow $\Gamma$ to be an arbitrary poly-torsion-free-abelian group. Some of the previous work (for instance, \cite{harvey} and \cite{leidy}) focused on the case where $\Gamma=\pi_{1}(M)/\pi_{1}(M)^{(n)}_{r}$, the quotient of the fundamental group by the $n$th term of the (rational) derived series.

In Section \ref{geninfsection}, we investigate the effect of the construction known as ``infection by a knot'' on these higher-order linking forms for 3-manifolds. The construction of infecting a knot by a knot has been used extensively (for example, see \cite{cot1}, \cite{cot2} and \cite{nckt}). The effect of this construction on the higher-order Alexander modules of knots was studied in \cite{nckt}. The effect on the higher-order linking forms for knots was studied in \cite{leidy}. Infecting a 3-manifold by a knot was defined in \cite{harveycobordism}.

\section{Definition of Higher-Order Linking Forms for 3-manifolds}\label{defsection}

In order to define our linking forms, we will need a coefficient
system that embeds in its right ring of quotients. (A right ring of
quotients is the non-commutative analogue of a quotient field.) It
was shown in \cite{cot1} that the group rings of a certain class of
groups, namely poly-torsion-free-abelian groups have this property.

\begin{definition}A group $\Gamma$ is
poly-torsion-free-abelian (PTFA) if it admits a normal series $1 =
G_n \vartriangleleft G_{n-1} \vartriangleleft \ldots
\vartriangleleft G_0=\Gamma$ of subgroups such that the factors
$G_{i}/G_{i+1}$ are torsion-free abelian.
\end{definition}

\begin{proposition}[\cite{cot1}, Prop. 2.5]
If $\Gamma$ is PTFA, it follows that $\Z\Gamma$ is an Ore domain,
and therefore it is possible to define the right ring of fractions
of $\Z\Gamma$.
\end{proposition}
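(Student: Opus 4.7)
The plan is to induct on the length $n$ of a PTFA normal series $1 = G_n \vartriangleleft G_{n-1} \vartriangleleft \cdots \vartriangleleft G_0 = \Gamma$. In the base case $\Gamma$ is torsion-free abelian, so $\Z\Gamma$ is a commutative ring. Torsion-free abelian groups admit a bi-invariant total order, and comparing the maximal group elements appearing in the supports of two nonzero elements of $\Z\Gamma$ shows that their product is again nonzero. Hence $\Z\Gamma$ is a commutative integral domain, and is automatically Ore.

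For the inductive step, set $N := G_1$, so that $N$ is PTFA with a shorter normal series and $A := \Gamma/N$ is torsion-free abelian. By the inductive hypothesis $\Z N$ is an Ore domain; let $\K N$ denote its right ring of fractions. Conjugation in $\Gamma$, via any set-theoretic section $s: A \to \Gamma$, induces an action of $A$ on $N$ and hence on $\Z N$ by ring automorphisms, together with a $2$-cocycle measuring the failure of $s$ to be a homomorphism. By the universal property of Ore localization, each automorphism extends uniquely to $\K N$, and the cocycle lifts unchanged; this data assembles into a crossed product ring $\K N * A$ that contains $\Z\Gamma$ as a subring.

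Next I would show $\K N * A$ is a domain. Every element is a finite $\K N$-linear combination $\sum_{a \in A} r_a \overline{s(a)}$, and after choosing a bi-invariant order on $A$ each nonzero element has a well-defined maximum $a$ in its support. Since $\K N$ is a skew field, the leading-term comparison shows that the product of two nonzero elements is nonzero. To get the Ore condition for $\Z\Gamma$ itself, I would write $A$ as a directed union of finitely generated subgroups $A_\alpha \cong \Z^{k(\alpha)}$; each $\K N * A_\alpha$ is an iterated skew Laurent polynomial extension of the skew field $\K N$, hence a right Noetherian domain and therefore right Ore. Given $x, y \in \Z\Gamma$ with $y \ne 0$, one chooses $A_\alpha$ large enough to contain the images of the supports of $x$ and $y$, finds $p, q \in \K N * A_\alpha$ with $xp = yq$ and $p \ne 0$ via the Ore property there, and then clears denominators—first the $\K N$-denominators using the Ore condition in $\Z N$, then the section-valued denominators—to produce elements of $\Z\Gamma$ satisfying the right Ore condition.

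The main obstacle is that final descent from $\K N * A$ down to $\Z\Gamma$: being a subring of an Ore domain is not itself enough to be Ore, so one must actually produce the common right multiple inside $\Z\Gamma$. The bookkeeping required to clear both the inductively-produced $\K N$-denominators and the crossed-product denominators simultaneously is the technical heart of the argument; the construction of $\K N * A$ and the leading-term proof that it is a domain, while notationally involved, are essentially formal. The full argument in this form is carried out in Passman's treatment of group rings, which is the ultimate source of the cited Cochran--Orr--Teichner proposition.
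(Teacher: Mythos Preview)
The paper does not actually prove this proposition; it is stated with a citation to \cite{cot1}, Proposition~2.5, and no argument is given in the present paper. So there is nothing to compare your proposal against here.

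That said, your sketch is a faithful outline of the standard argument, and you correctly identify its one genuine difficulty. One remark that would streamline your descent step: rather than clearing denominators by hand, it is cleaner to observe that since $N \trianglelefteq \Gamma$, the set $S = \Z N \setminus \{0\}$ is a right Ore set not only in $\Z N$ but in $\Z\Gamma$ (conjugation by $\Gamma$ permutes $S$), and the localization $\Z\Gamma S^{-1}$ is exactly your crossed product $\K N * A$. Once you know $\K N * A$ is right Ore (via your finitely-generated-subgroup argument, which is correct), the Ore property for $\Z\Gamma$ follows by transitivity of Ore localization, avoiding the ad~hoc denominator chase. This is essentially how the argument is organized in Passman and in \cite{cot1}.
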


Suppose $\phi:\pi_1(M) \to \Gamma$ is a coefficient system, where
$\Gamma$ is PTFA. Then $\Z\Gamma$ has a right ring of fractions,
which we will denote by $\K\Gamma$. This right ring of fractions, $\K\Gamma$, is always a flat $\Z\Gamma$-module. (See \cite{stenstrom}, Prop. II.3.5.) If $\R$ is an Ore domain such
that $\Z\Gamma \subset \R \subset \K\Gamma$, then $\K\Gamma$ is also
the right ring of fractions of $\R$. (Such $\R$ could be $\Z\Gamma$
itself or could result from localizing any Ore set of $\Z\Gamma$.)

\begin{theorem}\label{def}Suppose $M$ is a closed, connected, oriented 3-manifold and
$\phi:\pi_1(M) \to \Gamma$ is a PTFA coefficient system. If $\R$ is
an Ore domain such that $\Z\Gamma \subset \R \subset \K\Gamma$, then
there is a linking form defined on the torsion submodule of
$H_1(M;\R)$:
$$\Bl_\R:TH_1(M;\R) \rightarrow \left(TH_1(M;\R) \right)^{\#}.$$
\end{theorem}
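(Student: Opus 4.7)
The plan is to construct $\Bl_\R$ as the composition of four standard ingredients: Poincar\'e duality with twisted coefficients, the Bockstein homomorphism of a coefficient short exact sequence, a partial inverse of that Bockstein on torsion, and Kronecker evaluation. This is the natural generalization of the classical Blanchfield pairing to the (possibly unlocalized) PTFA setting, so the task is to check that the ingredients assemble and are well defined even when $\R$ is not a PID.

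First, since $M$ is a closed, connected, oriented $3$-manifold, twisted Poincar\'e duality gives an isomorphism $PD\colon H^2(M;\R)\xrightarrow{\cong}H_1(M;\R)$. Next, I would use the short exact sequence of $(\Z\Gamma,\R)$-bimodules
$$0 \longrightarrow \R \longrightarrow \K\Gamma \longrightarrow \K\Gamma/\R \longrightarrow 0$$
and its long exact sequence in cohomology, which contains a Bockstein $B\colon H^1(M;\K\Gamma/\R)\to H^2(M;\R)$. The key observation is that $\K\Gamma$ is flat over $\R$ and that every nonzero element of $\K\Gamma$ is a unit, so any $\R$-torsion class in $H^*(M;\R)$ maps to zero in $H^*(M;\K\Gamma)$. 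Applied to the long exact sequence, this shows that every class in $TH^2(M;\R)=PD^{-1}(TH_1(M;\R))$ lies in the image of $B$.

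Given $\alpha \in TH_1(M;\R)$, I would choose $\beta \in H^1(M;\K\Gamma/\R)$ with $B(\beta) = PD^{-1}(\alpha)$ and define
$$\Bl_\R(\alpha)\;:=\;\kappa(\beta)\big|_{TH_1(M;\R)},$$
where $\kappa\colon H^1(M;\K\Gamma/\R)\to \Hom_\R(H_1(M;\R),\K\Gamma/\R)$ is the Kronecker evaluation coming from the universal coefficient spectral sequence. To see this is independent of the lift, note that two choices of $\beta$ differ by something in the image of $H^1(M;\K\Gamma)\to H^1(M;\K\Gamma/\R)$; after applying $\kappa$, such a difference yields an element of $\Hom_\R(H_1(M;\R),\K\Gamma/\R)$ that factors through $\K\Gamma$. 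Since $\K\Gamma$ is torsion-free as an $\R$-module, any such homomorphism vanishes on $TH_1(M;\R)$. The $\R$-linearity and sesquilinearity with respect to the standard involution on $\Z\Gamma$ (inherited by $\R$ and $\K\Gamma$) follow from the naturality of Poincar\'e duality, the Bockstein, and Kronecker evaluation.

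The main obstacle, compared to the localized PID situation treated in \cite{cot1} and \cite{leidy}, is that $\R$ may have homological dimension greater than one, so the universal coefficient spectral sequence can carry nontrivial $\operatorname{Ext}^1$ terms and $\kappa$ need not be an isomorphism. This is precisely why the pairing is permitted to be singular: no nondegeneracy is claimed in the statement, and the careful point is that the well-definedness argument above relies only on the vanishing $\Hom_\R(T,\K\Gamma)=0$ for any $\R$-torsion module $T$, not on any PID-style surjectivity of $\kappa$ or splitting of the universal coefficient spectral sequence.
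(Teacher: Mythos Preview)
Your argument is correct and is essentially the same as the paper's: both assemble $\Bl_\R$ from Poincar\'e duality, the Bockstein for $0\to\R\to\K\Gamma\to\K\Gamma/\R\to 0$, Kronecker evaluation, and restriction to torsion, with well-definedness coming from $\Hom_\R(TH_1(M;\R),\K\Gamma)=0$. The only cosmetic difference is the order of operations: the paper applies the homology Bockstein first (identifying $TH_1(M;\R)\cong\coker(H_2(M;\K\Gamma)\to H_2(M;\K\Gamma/\R))$) and then Poincar\'e duality $H_2(M;\K\Gamma/\R)\to\overline{H^1(M;\K\Gamma/\R)}$, whereas you apply $PD^{-1}$ first and then the cohomology Bockstein---these are equivalent by naturality.
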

Here we use $\mathcal{M}^{\#}$ to denote $\overline{\Hom_{\R} \left(
\mathcal{M}, \K\Gamma/\R \right)}$. Also given any left $R$-module
$\mathcal{M}$, we use $\overline{\mathcal{M}}$ to denote the usual
associated right $R$-module resulting from the involution of $R$. The module $TH_1(M;\R)$ on which $\Bl_{\R}$ is defined is referred to as a \emph{higher-order Alexander module} of $M$. (Such modules were defined and studied in \cite{harvey}, where the focus was on the case where $\Gamma=\pi_{1}(M)/\pi_{1}(M)^{(n)}_{r}$.)

\begin{proof}
The short exact sequence $0 \rightarrow \R \rightarrow \K\Gamma
\rightarrow \K\Gamma/\R \rightarrow 0$ gives rise to the Bockstein
sequence of right $\R$-modules:
$$H_2(M;\K\Gamma) \overset{\psi}\rightarrow H_2(M;\K\Gamma/\R)
\overset{B}{\rightarrow} H_1(M;\R) \rightarrow H_1(M;\K\Gamma).$$
Since $\K\Gamma$ is a flat $\R$-module, $TH_1(M;\R)$ is the kernel
of the map $H_1(M;\R) \to H_1(M;\R)\otimes_{\R}\K\Gamma \cong
H_1(M;\K\Gamma)$. Using the Bockstein sequence above, we have
$TH_1(M;\R) = \im B \cong \coker \psi$. Hence in order to define
$\Bl_\R$ on $TH_1(M;\R)$, it suffices to define a map on
$H_2(M;\K\Gamma/\R)$ such that $\im\psi$ is in the kernel.

Consider the following commutative diagram of right $\R$-modules.
\renewcommand{\dgeverylabel}{\displaystyle}
$$
\begin{diagram}\dgARROWLENGTH=1em
\node{H_2(M;\K\Gamma)} \arrow[2]{e,t}{\psi} \arrow{s,r}{\text{P.D.}} \node{} \node{H_2(M;\K\Gamma/\R)} \arrow{s,r}{\text{P.D.}} \\
\node{\overline{H^1(M;\K\Gamma)}} \arrow[2]{e} \arrow{s,r}{\kappa} \node{} \node{\overline{H^1(M;\K\Gamma/\R)}} \arrow{s,r}{\kappa} \\
\node{\overline{\Hom_{\R}(H_1(M;\R),\K\Gamma)}} \arrow[2]{e} \arrow{s,r}{j^{\#}} \node{} \node{\overline{\Hom_{\R}(H_1(M;\R),\K\Gamma/\R)}} \arrow{s,r}{j^{\#}}  \\
\node{\overline{\Hom_{\R}(TH_1(M;\R),\K\Gamma)}} \arrow[2]{e} \node{} \node{\overline{\Hom_{\R}(TH_1(M;\R),\K\Gamma/\R)}} \\
\end{diagram}
$$

\vspace{-.25in}\noindent Here P.D. is the Poincar\'{e} duality isomorphism, $\kappa$ is the
Kronecker evaluation map, and $j^{\#}$ is induced by the inclusion
map.

Since $\K\Gamma$ is a torsion-free $\R$-module, it follows that
$\Hom_{ \R} \left( TH_1(M;\R), \K\Gamma \right) = 0$. In other words, the lower left corner of the above diagram is 0. Therefore the
image of $\psi$ is in the kernel of the composition $ j^{\#} \circ
\kappa \circ \text{P.D.}$. Hence, there is a well-defined map,
$\Bl_\R$, such that the following diagram is commutative.
\renewcommand{\dgeverylabel}{\displaystyle}
$$
\begin{diagram}
\node{H_2(M;\K\Gamma/\R)} \arrow[2]{e,t}{B} \arrow{s,l}{\text{P.D.}} \node{} \node{TH_1(M;\R)} \arrow{sssww,b}{\Bl_\R} \\
\node{\overline{H^1(M;\K\Gamma/\R)}} \arrow{s,l}{\kappa} \node{} \node{}\\
\node{\left(H_1(M;\R) \right)^{\#}} \arrow{s,l}{j^{\#}} \node{} \node{}\\
\node{\left(TH_1(M;\R) \right)^{\#}} \node{} \node{}\\
\end{diagram}
$$
\end{proof}

\section{The effect of infection by a knot on $\Bl_{\R}$}\label{geninfsection}

In this section, we consider the effect of infection by a knot on these higher-order linking forms. Let $M$ be a closed, connected, oriented 3-manifold, and let $\eta$ be an embedded, oriented, nullhomologous circle in $M$.
Then $\eta$ has a well-defined meridian, $\mu_{\eta}$, and
longitude, $\ell_{\eta}$. Delete the interior of a tubular
neighborhood of $\eta$. Replace it with the exterior, $E(J)$ of some
knot $J$ in $S^{3}$, identifying $\mu_{\eta}$ with the reverse of the longitude
$\ell_J$ of $J$, and $\ell_{\eta}$ with the meridian $\mu_J$ of $J$.
Denote the result $M(\eta, J)$, \emph{the result of infecting $M$ by $J$ along $\eta$}.

Let $\phi:\pi_1(M) \to \Gamma$ be a PTFA
coefficient system, and $\R$ be an Ore domain such that $\Z\Gamma
\subset \R \subset \K\Gamma$. Since there is a degree one map (rel boundary) $f:E(J) \to
E(\text{unknot})$, there is a degree one map from $M(\eta,J)$ to
$M$, which is the identity outside of $E(J)$. Hence the following
composition of maps defines coefficient systems on $E(J)$, $M(\eta,
J)$, and $M$:
$$\pi_1(E(J)) \overset{i_*}{\to} \pi_1(M(\eta,J)) \overset{f_*}{\to}
\pi_1(M) \overset{\phi}{\to} \Gamma.$$

First, we investigate the effect of infecting a 3-manifold by a knot on the higher-order Alexander modules, $TH_1(M;\R)$, on which the higher-order linking forms, $\Bl_{\R}(M)$, are defined. The effect of infecting a knot by a knot on the higher-order Alexander modules of knots was studied in Section 8 of \cite{nckt}.

\begin{proposition}\label{modules}
If $\phi(\eta)=1$, then $H_1(M(\eta,J);\R) \cong H_1(M;\R)$. If
$\phi(\eta) \neq 1$, then $H_1(M(\eta,J);\R) \cong H_1(M;\R) \oplus
H_1(E(J);\R)$.
\end{proposition}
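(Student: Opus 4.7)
My plan is to apply Mayer--Vietoris to the decompositions $M = X \cup_{T^2}(D^2\times S^1)$ and $M(\eta,J) = X \cup_{T^2} E(J)$, where $X = M\setminus \mathrm{int}(\nbhd(\eta))$, and to compare the two resulting long exact sequences via the degree-one map $f\colon M(\eta,J)\to M$. A preliminary step identifies the restricted coefficient systems on the pieces. Since $\mu_\eta$ bounds a meridional disk in $\nbhd(\eta)\subset M$, we have $\phi(\mu_\eta)=1$, so the coefficient system on $T^2$ carries $\mu_\eta\mapsto 1$ and $\ell_\eta\mapsto \phi(\eta)$. Because $f|_{E(J)}\colon E(J)\to D^2\times S^1$ has degree one, it induces the abelianization $\pi_1(E(J))\twoheadrightarrow H_1(E(J))=\Z$ on fundamental groups, so the coefficient systems on $E(J)$ and on $D^2\times S^1\simeq S^1$ both factor through $\Z=\langle\mu_J\rangle$ with generator sent to $\phi(\eta)$.

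If $\phi(\eta)=1$, all the restricted systems above are trivial, so $H_*(Y;\R)\cong H_*(Y;\Z)\otimes_\Z\R$ for $Y\in\{T^2,E(J),D^2\times S^1\}$, and in particular $f_*\colon H_i(E(J);\R)\to H_i(D^2\times S^1;\R)$ is an isomorphism for $i=0,1$ (both are $\R$, with $\mu_J$ carried to $\ell_\eta$). The five-lemma applied to the comparison of Mayer--Vietoris sequences around degree $1$ then yields $H_1(M(\eta,J);\R)\cong H_1(M;\R)$.

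If $\phi(\eta)\neq 1$, set $k=\phi(\eta)$. Since $\Gamma$ is torsion-free, $k-1$ is not a zero-divisor in $\R$, and a direct chain-level computation gives $H_*(D^2\times S^1;\R)=(\R/(k-1)\R,0,0)$, $H_*(T^2;\R)=(\R/(k-1)\R,\R/(k-1)\R,0)$, and $H_0(E(J);\R)=\R/(k-1)\R$, with the inclusion-induced maps $H_0(T^2;\R)\to H_0(E(J);\R)$ and $H_0(T^2;\R)\to H_0(D^2\times S^1;\R)$ both isomorphisms. Consequently the connecting maps $H_1(M(\eta,J);\R)\to H_0(T^2;\R)$ and $H_1(M;\R)\to H_0(T^2;\R)$ both vanish, so the two Mayer--Vietoris sequences truncate to
\[
H_1(T^2;\R)\xrightarrow{(\iota_X,-\iota_E)} H_1(X;\R)\oplus H_1(E(J);\R)\twoheadrightarrow H_1(M(\eta,J);\R)
\]
and $H_1(T^2;\R)\xrightarrow{\iota_X} H_1(X;\R)\twoheadrightarrow H_1(M;\R)$. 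Granted $\iota_E=0$, the first cokernel splits as $\bigl(H_1(X;\R)/\iota_X(H_1(T^2;\R))\bigr)\oplus H_1(E(J);\R)\cong H_1(M;\R)\oplus H_1(E(J);\R)$.

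The main obstacle is proving $\iota_E\colon H_1(T^2;\R)\to H_1(E(J);\R)$ is zero. Since $H_1(T^2;\R)$ is cyclic on $\mu_\eta$, which the gluing identifies with $\ell_J^{-1}$, it suffices to show $[\ell_J]=0$ in $H_1(E(J);\R)$. I would argue this by lifting a Seifert surface $\Sigma'\subset E(J)$ with $\partial\Sigma'=\ell_J$ (obtained by intersecting a Seifert surface of $J$ with $E(J)$) to the $\Gamma$-cover of $E(J)$: every loop on $\Sigma'$ has linking number zero with $J$, so $\pi_1(\Sigma')\to H_1(E(J))=\Z$ is trivial and therefore $\pi_1(\Sigma')\to \Gamma$ is trivial; hence $\Sigma'$ lifts, and its lift is a $2$-chain whose boundary is a lift of $\ell_J$, showing $[\ell_J]$ vanishes in $H_1$ of the cover and therefore in $H_1(E(J);\R)$.
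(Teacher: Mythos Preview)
Your proof is correct and follows essentially the same route as the paper: both use the Mayer--Vietoris sequence for $M(\eta,J)=M(\eta)\cup_{T^2}E(J)$, compare it to the unknot case $M=M(\eta)\cup_{T^2}E(U)$ via the degree-one map $f$, and in the nontrivial case kill the map $H_1(T^2;\R)\to H_1(E(J);\R)$ by lifting a Seifert surface. The only differences are packaging: the paper isolates the fact that the coefficient system on $E(J)$ factors through $\Z$ (resp.\ $\Z[t,t^{-1}]$) and that $\R$ is flat over these as a standalone lemma, whereas you do the resulting computations directly and invoke the five-lemma in the $\phi(\eta)=1$ case instead of identifying both cokernels with $H_1(M(\eta);\R)/\im\psi_2$.
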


\begin{proof}We begin by stating and proving the following necessary
lemma.

\begin{lemma}\label{E(J)Lemma}If $\phi(\eta)=1$, then $H_*(E(J);\R) \cong
H_*(E(J);\Z) \otimes_{\Z} \R$. If $\phi(\eta) \neq 1$, then
$H_*(E(J);\R) \cong H_*(E(J);\Z[t,t^{-1}]) \otimes_{\Z[t,t^{-1}]}
\R$, where $\R$ is a left $\Z[t,t^{-1}]$-module by the homomorphism
$t \mapsto \phi(\eta)$.
\end{lemma}

\begin{proof}Let $M(\eta)$ denote the result of deleting the interior of a
tubular neighborhood of $\eta$ from $M$. By the Seifert-Van Kampen
Theorem, we have the following presentations of $\pi_1(M(\eta,J))$
and $\pi_1(M)$:
$$\pi_1(M(\eta,J))=\langle \pi_1(M(\eta)), \pi_1(E(J)) |
\mu_{\eta}=\ell_J^{-1}, \ell_{\eta}=\mu_J \rangle$$
$$\pi_1(M)=\langle \pi_1(M(\eta)), t |
\mu_{\eta}=1, \ell_{\eta}=t \rangle$$ The map $f_*:\pi_1(M(\eta,J))
\to \pi_1(M)$ is the identity map on $\pi_1(M(\eta))$ and is the
Hurewicz map on $\pi_1(E(J)) \to \Z \cong \left<t\right>$ which
sends $\ell_J \mapsto 1$ and $\mu_J \mapsto t$. Therefore the map
$\phi \circ f_* \circ i_*: \pi_1(E(J)) \to \Gamma$ that defines the
coefficient system on $E(J)$ factors through the Hurewicz map, and
thus we have the following commutative diagram:
$$
\begin{diagram}\dgARROWLENGTH=1em
\node{\Z\pi_1(E(J))} \arrow{e} \arrow{s} \node{\R} \\
\node{\Z[t,t^{-1}]}  \arrow{ne,b}{\psi} \\
\end{diagram}
$$

\vspace{-.35in}\noindent Here $\psi: t \mapsto \phi(\eta)$.

If $\phi(\eta) \neq 1$, then $\psi$ is a monomorphism. It follows
from \cite[Lemma 1.3]{passman} that $\R$ is a free, and therefore
flat $\Z[t,t^{-1}]$-module. If $C_*(E(J);\Z\pi_1)$ denotes the chain
complex of the universal cover of $E(J)$ with the action of
$\Z\pi_1(E(J))$ on it, then we have:
\begin{eqnarray*}
H_*(E(J);\R) &=&H_*(C_*(E(J);\Z\pi_1) \otimes_{\Z\pi_1(E(J))} \R) \\
&\cong& H_*(C_*(E(J);\Z\pi_1) \otimes_{\Z\pi_1(E(J))} \Z[t,t^{-1}] \otimes_{\Z[t,t^{-1}]} \R) \\
&\cong& H_*(E(J);\Z[t,t^{-1}]) \otimes_{\Z[t,t^{-1}]} \R. \\
\end{eqnarray*}

If $\phi(\eta)=1$, then $\psi$ further factors through $\Z$:

$$
\begin{diagram}\dgARROWLENGTH=1em
\node{\Z\pi_1(E(J))} \arrow{e} \arrow{s} \node{\R} \\
\node{\Z}     \arrow{ne,b}{\widehat{\psi}}    \\
\end{diagram}
$$

\vspace{-.35in}\noindent Since $\widehat{\psi}$ is a monomorphism, it follows that $\R$ is a
free and therefore flat $\Z$-module. By an argument analogous to
that above, $H_*(E(J);\R) \cong H_*(E(J);\Z) \otimes_{\Z} \R$.
\end{proof}

We now continue with the proof of Proposition \ref{modules}.
Consider the Mayer-Vietoris sequence for $M(\eta,J) \cong E(J)
\cup_{\partial E(J)} M(\eta)$:
\renewcommand{\dgeverylabel}{\displaystyle}
$$
\begin{diagram}
\node{H_1(M(\eta,J);\R)} \arrow{e,t}{\partial_*} \node{H_0(\partial E(J);\R)}
\arrow{e,t}{(\psi_1,\psi_2)} \node{H_0(E(J);\R) \oplus H_0(M(\eta);\R).} \\
\end{diagram}
$$

\vspace{-.45in}Since $H_0(\partial E(J);\Z) \cong H_0(E(J);\Z)$ and $H_0(\partial
E(J);\Z[t,t^{-1}]) \cong H_0(E(J);\Z[t,t^{-1}])$, it follows from
Lemma \ref{E(J)Lemma} that $\partial_*$ is the trivial map.

Since infecting by the unknot, $U$, leaves the manifold unchanged,
we have the following commutative diagram of $\R$-modules where the
rows are Mayer-Vietoris exact sequences:
\renewcommand{\dgeverylabel}{\displaystyle}
$$
\begin{diagram}\dgARROWLENGTH=2.5em
\node{H_1(\partial E(J))} \arrow{e,t}{(\psi_1,\psi_2)} \arrow{s,r}{f_*} \node{H_1(E(J)) \oplus H_1(M(\eta))} \arrow{e,t}{} \arrow{s,r}{f_*} \node{H_1(M(\eta,J))} \arrow{e,t}{\partial_*} \arrow{s,r}{f_*} \node{0} \\
\node{H_1(\partial E(J))} \arrow{e,t}{(\overline{\psi_1},\psi_2)} \node{H_1(E(U))\oplus H_1(M(\eta))} \arrow{e,t}{} \node{H_1(M)} \arrow{e,t}{\partial_*} \node{0.}\\
\end{diagram}
$$

\vspace{-.5in}Suppose $\phi(\eta)=1$. Since $H_1(\partial E(J);\Z) \to
H_1(E(J);\Z)$ is an epimorphism, by Lemma \ref{E(J)Lemma}, $\psi_1$
is an epimorphism . Hence $H_1(M(\eta,J);\R) \cong
H_1(M(\eta);\R)/\im(\psi_2)$. Similarly, $H_1(M;\R) \cong
H_1(M(\eta);\R)/\im(\psi_2)$. Therefore, $H_1(M(\eta,J);\R) \cong
H_1(M;\R)$.

Suppose $\phi(\eta) \neq 1$. Since $\mu_J$ unwinds and $\ell_J$
bounds a lift of the Seifert surface in the infinite cyclic cover,
$H_1(\partial E(J);\Z[t,t^{-1}]) \to H_1(E(J);\Z[t,t^{-1}])$ is the
zero map. By Lemma \ref{E(J)Lemma}, it follows that $\psi_1$ is the
zero map. Hence $H_1(M(\eta,J);\R) \cong H_1(E(J);\R) \oplus
H_1(M(\eta);\R)/\im(\psi_2)$. Furthermore, since
$H_1(E(U);\Z[t,t^{-1}])=0$, it follows that $H_1(E(U);\R)=0$. Hence
$H_1(M;\R) \cong H_1(M(\eta);\R)/\im(\psi_2)$. Therefore,
$H_1(M(\eta,J);\R) \cong H_1(E(J);\R) \oplus H_1(M;\R)$.
\end{proof}

\begin{corollary}\label{modules-torsion}
If $\phi(\eta) \neq 1$, then $$TH_1(M(\eta,J);\R) \cong TH_1(M;\R)
\oplus H_1(E(J);\R) \cong TH_1(M;\R) \oplus \left(\A_0(J)
\otimes_{\Z[t,t^{-1}]} \R \right),$$ where $\A_0(J)$ is the
classical Alexander module of $J$.
\end{corollary}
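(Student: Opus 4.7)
The plan is to apply Proposition \ref{modules} directly and observe that passage to the torsion submodule respects the direct sum decomposition; the real content is then showing that the $H_1(E(J);\R)$ summand is already entirely torsion, after which Lemma \ref{E(J)Lemma} supplies the identification with $\A_0(J) \otimes_{\Z[t,t^{-1}]} \R$.

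First, since $\phi(\eta) \neq 1$, Proposition \ref{modules} yields $H_1(M(\eta,J);\R) \cong H_1(M;\R) \oplus H_1(E(J);\R)$. Because $\K\Gamma$ is flat over $\R$, the torsion submodule of a right $\R$-module $N$ is the kernel of the natural map $N \to N \otimes_{\R} \K\Gamma$, and this construction commutes with finite direct sums. Hence $TH_1(M(\eta,J);\R) \cong TH_1(M;\R) \oplus TH_1(E(J);\R)$, and it remains only to identify the second summand.

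The main step is to show $TH_1(E(J);\R) = H_1(E(J);\R)$. By Lemma \ref{E(J)Lemma}, $H_1(E(J);\R) \cong \A_0(J) \otimes_{\Z[t,t^{-1}]} \R$, where $\R$ is a left $\Z[t,t^{-1}]$-module via $t \mapsto \phi(\eta)$. It therefore suffices to prove that $\A_0(J) \otimes_{\Z[t,t^{-1}]} \K\Gamma = 0$. For this I would use two ingredients: (i) $\A_0(J)$ is annihilated by the classical Alexander polynomial $\Delta_J(t)$, since it admits a square presentation matrix whose determinant is $\Delta_J$; and (ii) $\Delta_J(\phi(\eta))$ is a unit in $\K\Gamma$. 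Ingredient (ii) uses the hypothesis: since $\Gamma$ is PTFA (hence torsion-free) and $\phi(\eta) \neq 1$, the element $\phi(\eta)$ has infinite order, so $\Z[\phi(\eta),\phi(\eta)^{-1}]$ embeds in the Ore domain $\Z\Gamma$ as an honest Laurent polynomial ring; consequently $\Delta_J(\phi(\eta))$ is a nonzero element of $\Z\Gamma$, hence a unit in $\K\Gamma$. Any simple tensor $x \otimes k$ in $\A_0(J) \otimes_{\Z[t,t^{-1}]} \K\Gamma$ can then be rewritten as $x \otimes \Delta_J(\phi(\eta)) \cdot \bigl(\Delta_J(\phi(\eta))^{-1} k\bigr) = x \cdot \Delta_J(t) \otimes \Delta_J(\phi(\eta))^{-1} k = 0$, where the last equality uses (i).

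Combining these steps yields both isomorphisms in the corollary. The main subtlety, and the place where the hypothesis $\phi(\eta) \neq 1$ is essential, is ensuring that $\Delta_J(\phi(\eta))$ is nonzero in $\Z\Gamma$ and therefore invertible in $\K\Gamma$; this in turn rests on the torsion-freeness built into the PTFA condition on $\Gamma$.
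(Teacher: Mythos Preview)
Your proof is correct and follows essentially the same approach as the paper: apply Proposition~\ref{modules}, use Lemma~\ref{E(J)Lemma} to identify $H_1(E(J);\R)$ with $\A_0(J)\otimes_{\Z[t,t^{-1}]}\R$, and observe that this summand is already torsion because the Alexander polynomial annihilates it. You are in fact more explicit than the paper on the one point that deserves care---namely, that $\Delta_J(\phi(\eta))$ is a \emph{nonzero} element of $\Z\Gamma$ (hence a unit in $\K\Gamma$), which uses the torsion-freeness of the PTFA group $\Gamma$; the paper simply asserts that annihilation by the Alexander polynomial makes the module torsion.
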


\begin{proof}
Since $\A_0(J) = H_1(E(J);\Z[t,t^{-1}])$ is annihilated by the
Alexander polynomial, it follows that $\left(\A_0(J)
\otimes_{\Z[t,t^{-1}]} \R \right)$ is a torsion module. The result
now follows from Proposition \ref{modules} and Lemma
\ref{E(J)Lemma}.
\end{proof}

We now consider the effect of infecting a 3-manifold by a knot on the higher-order linking forms for 3-manifolds. The effect of infecting a knot by a knot on the higher-order linking forms for knots was shown in Section 4 of \cite{leidy}.

\begin{proposition}\label{lowerforms}
If $\phi(\eta)=1$, then the linking forms
$\Bl_\R(M(\eta,J)):TH_1(M(\eta,J);\R) \rightarrow
\left(TH_1(M(\eta,J);\R) \right)^{\#}$ and $\Bl_\R(M):TH_1(M;\R)
\rightarrow \left(TH_1(M;\R) \right)^{\#}$ are isomorphic.
\end{proposition}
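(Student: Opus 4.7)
The plan is to show that the isomorphism $f_*\colon TH_1(M(\eta,J);\R)\to TH_1(M;\R)$ induced by the degree-one map $f\colon M(\eta,J)\to M$ and furnished by Proposition~\ref{modules} intertwines the two linking forms; that is, for every $\bar y,\bar w\in TH_1(M(\eta,J);\R)$,
$$\Bl_{\R}(M(\eta,J))(\bar y)(\bar w)=\Bl_{\R}(M)(f_*\bar y)(f_*\bar w).$$
Since the coefficient system on $M(\eta,J)$ is $\phi\circ f_*$ by construction, $f$ lifts $\Gamma$-equivariantly to a map of the associated regular $\Gamma$-covers, and every ingredient in the diagram defining $\Bl_{\R}$ in the proof of Theorem~\ref{def}---the Bockstein, Poincar\'e duality, the Kronecker evaluation $\kappa$, and the inclusion-induced map $j^{\#}$---is natural with respect to $f$.

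Given $\bar y\in TH_1(M(\eta,J);\R)$, I first pick $y\in H_2(M;\K\Gamma/\R)$ with $B(y)=f_*\bar y$ and let $\alpha=\text{P.D.}(y)$, so that $\alpha\cap[M]=y$. Then I would take as my preimage of $\bar y$ under the Bockstein the class
$$y':=f^*\alpha\cap[M(\eta,J)]\in H_2(M(\eta,J);\K\Gamma/\R).$$
By naturality of the cap product and the identity $f_*[M(\eta,J)]=[M]$ that holds because $f$ has degree one, $f_*y'=\alpha\cap f_*[M(\eta,J)]=\alpha\cap[M]=y$. Applying $B$ and using Bockstein naturality gives $f_*(B(y'))=B(y)=f_*\bar y$; since $f_*\colon H_1(M(\eta,J);\R)\to H_1(M;\R)$ is an isomorphism by Proposition~\ref{modules}, it follows that $B(y')=\bar y$. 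By construction, $\text{P.D.}(y')=f^*\alpha$.

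Finally, the Kronecker map satisfies $\kappa(f^*\alpha)=\kappa(\alpha)\circ f_*$, and $j^{\#}$ (restriction to the torsion submodule) commutes with precomposition by $f_*$ since $f_*$ carries torsion to torsion. Chasing these through the diagram defining $\Bl_{\R}$ yields
$$\Bl_{\R}(M(\eta,J))(\bar y)(\bar w)=j^{\#}\kappa(\text{P.D.}(y'))(\bar w)=j^{\#}\kappa(f^*\alpha)(\bar w)=j^{\#}\kappa(\alpha)(f_*\bar w)=\Bl_{\R}(M)(f_*\bar y)(f_*\bar w),$$
as desired. The main step is the construction of the lift $y':=f^*\alpha\cap[M(\eta,J)]$, which packages the Poincar\'e-duality naturality---awkward to express in the direction $H_2\to\overline{H^1}$ used by the diagram, because $f_*$ on $H_2$ need not be an isomorphism---into the clean cap-product identity $f_*(f^*\alpha\cap[M(\eta,J)])=\alpha\cap[M]$ available for every degree-one map. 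It is at this step that the hypothesis $\phi(\eta)=1$ enters, via Proposition~\ref{modules}, to ensure that $y'$ lifts $\bar y$ on the nose.
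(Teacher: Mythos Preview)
Your proof is correct and follows the same strategy as the paper: use the degree-one map $f\colon M(\eta,J)\to M$, invoke Proposition~\ref{modules} to see that $f_*$ is an isomorphism on $TH_1$, and then use naturality of the maps defining $\Bl_\R$ to conclude $\Bl_\R(M(\eta,J))=f^*\circ\Bl_\R(M)\circ f_*$.

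Where you differ is in the treatment of the Poincar\'e duality step. The paper simply draws a large diagram and asserts it commutes; but the square
\[
\begin{diagram}
\node{H_2(M(\eta,J);\K\Gamma/\R)} \arrow{e,t}{f_*} \arrow{s,l}{\text{P.D.}} \node{H_2(M;\K\Gamma/\R)} \arrow{s,r}{\text{P.D.}}\\
\node{\overline{H^1(M(\eta,J);\K\Gamma/\R)}} \node{\overline{H^1(M;\K\Gamma/\R)}} \arrow{w,t}{f^*}
\end{diagram}
\]
does \emph{not} literally commute: if it did, $\text{P.D.}_{M(\eta,J)}=f^*\circ\text{P.D.}_M\circ f_*$ would force $f_*$ to be injective on $H_2$, which need not hold. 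Your cap-product construction $y':=f^*\alpha\cap[M(\eta,J)]$ is precisely the fix: it produces a \emph{specific} Bockstein lift of $\bar y$ on which $\text{P.D.}$ lands at $f^*\alpha$, and the hypothesis $\phi(\eta)=1$ (via the injectivity of $f_*$ on $H_1(-;\R)$) is what guarantees $B(y')=\bar y$. Since $\Bl_\R$ is independent of the choice of lift, this suffices. So your argument is essentially the paper's, but with the one genuinely delicate square handled explicitly rather than by fiat.
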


\begin{proof}Recall that there is a degree one map $f: M(\eta,J)\to M$. By Proposition \ref{modules}, $f$ induces an isomorphism between
$TH_1(M(\eta,J);\R)$ and $TH_1(M;\R)$. 

\pagebreak

We have the following
commutative diagram:
\renewcommand{\dgeverylabel}{\displaystyle}

$$
\hspace{-.25in}\begin{diagram}\dgARROWPARTS=9
\node{H_2(M(\eta,J);\K\Gamma/\R)} \arrow[3]{e,t,6}{f_*} \arrow[2]{se,t,7}{B} \arrow{s,l}{P.D.} \node{} \node{} \node{H_2(M;\K\Gamma/\R)} \arrow[2]{se,t,7}{B} \arrow{s,l}{P.D.}\node{} \node{}\\
\node{\overline{H^1(M(\eta,J);\K\Gamma/\R)}} \arrow[2]{s,l}{\kappa} \node{} \arrow{w}  \node{} \node{\overline{H^1(M;\K\Gamma/\R)}} \arrow[2]{w,t,-}{f^*} \arrow{s,-} \node{} \node{} \\
\node{} \node{} \node{TH_1(M(\eta,J);\R)} \arrow[3]{e,t}{f_*} \arrow[2]{sw,b,6}{\Bl_\R(M(\eta,J))} \node{} \arrow{s,l}{\kappa} \node{} \node{TH_1(M;\R)} \arrow[2]{sw,b,6}{\Bl_\R(M)}\\
\node{\left(H_1(M(\eta,J);\R)\right)^{\#}} \arrow{s,l}{j^{\#}} \node{} \arrow{w} \node{} \node{\left(H_1(M;\R)\right)^{\#}} \arrow[2]{w,t,-}{f^*} \arrow{s,l}{j^{\#}}  \node{} \node{} \\
\node{\left(TH_1(M(\eta,J);\R)\right)^{\#}} \node{} \node{} \node{\left(TH_1(M;\R)\right)^{\#}} \arrow[3]{w,t,3}{f^*} \node{} \node{} \\
\end{diagram}
$$

\vspace{-.5in}Therefore $\Bl_\R(M(\eta,J))=f^{*} \circ \Bl_\R(M) \circ f_*$, and
hence $\Bl_\R(M(\eta,J))$ and $\Bl_\R(M)$ are isomorphic.
\end{proof}

In the remainder of this section, we show how the linking forms
$\Bl_{\R}(M)$ and $\Bl_{\R}(M(\eta,J))$ are related when $\phi(\eta)
\neq 1$. We begin by defining a linking form on $E(J)$ with
coefficients that are compatible with viewing $J$ as the infecting
knot of an infection.

\begin{proposition}
If $\phi(\eta) \neq 1$, then for any knot $J$, there is a linking
form $\Bl_\R(J):H_1(E(J);\R) \rightarrow \left(H_1(E(J);\R)
\right)^{\#}$ where the coefficient system is induced by the
composition $\pi_1(E(J)) \overset{i_*}{\to} \pi_1(M(\eta,J))
\overset{f_*}{\to} \pi_1(M) \overset{\phi}{\to} \Gamma$.
\end{proposition}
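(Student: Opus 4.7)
The plan is to model the construction on the proof of Theorem~\ref{def}, but now for the knot exterior $E(J)$, which is a compact 3-manifold with torus boundary. Two preparations are needed. First, I would verify that $H_1(E(J);\R)$ is entirely $\R$-torsion, so that the form is literally defined on all of $H_1(E(J);\R)$ as claimed, not merely on its torsion submodule. Second, I would replace Poincar\'e duality by Poincar\'e--Lefschetz duality for the pair $(E(J),\partial E(J))$, and check that the boundary contributions do not obstruct the construction.

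For the first point, I would show that $H_1(E(J);\K\Gamma)=0$. By Lemma~\ref{E(J)Lemma}, with the hypothesis $\phi(\eta)\neq 1$, we have $H_1(E(J);\K\Gamma)\cong \A_0(J)\otimes_{\Z[t,t^{-1}]}\K\Gamma$, where $t$ acts via $\phi(\eta)$. Because $\Gamma$ is PTFA, every nontrivial element is torsion-free, so $\phi(\eta)$ generates an infinite cyclic subgroup and the induced ring map $\Z[t,t^{-1}]\to\K\Gamma$ is injective. Since $\K\Gamma$ is a skew field, this embedding extends to a ring embedding of the classical field of fractions $\Q(t)\hookrightarrow\K\Gamma$. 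As $\A_0(J)$ is annihilated by the Alexander polynomial, we get $\A_0(J)\otimes_{\Z[t,t^{-1}]}\Q(t)=0$, and hence $\A_0(J)\otimes_{\Z[t,t^{-1}]}\K\Gamma=0$. Flatness of $\K\Gamma$ over $\R$ then yields $H_1(E(J);\R)=TH_1(E(J);\R)$, and the Bockstein $B:H_2(E(J);\K\Gamma/\R)\to H_1(E(J);\R)$ is surjective, so $H_1(E(J);\R)\cong\coker\psi$ where $\psi:H_2(E(J);\K\Gamma)\to H_2(E(J);\K\Gamma/\R)$.

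With this in hand, I would reproduce the diagram from Theorem~\ref{def} using Poincar\'e--Lefschetz duality, obtaining the composition
$$
H_2(E(J);\K\Gamma/\R)\xrightarrow{\text{P.L.D.}}\overline{H^1(E(J),\partial E(J);\K\Gamma/\R)}\xrightarrow{j^*}\overline{H^1(E(J);\K\Gamma/\R)}\xrightarrow{\kappa}\overline{\Hom_{\R}(H_1(E(J);\R),\K\Gamma/\R)}.
$$
To descend this to a map out of $\coker\psi$, I need $\im\psi$ to lie in the kernel of the composite. By naturality of the whole diagram in the coefficient sequence $0\to\R\to\K\Gamma\to\K\Gamma/\R\to 0$, this reduces to the vanishing of $\overline{\Hom_{\R}(H_1(E(J);\R),\K\Gamma)}$, which is automatic because $H_1(E(J);\R)$ is $\R$-torsion while $\K\Gamma$ is torsion-free over $\R$ --- precisely the vanishing argument used in Theorem~\ref{def}. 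The resulting map is $\Bl_{\R}(J)$.

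The main technical obstacle is bookkeeping for the boundary. In Theorem~\ref{def}, Poincar\'e duality lands in $\overline{H^1(M;-)}$, onto which Kronecker evaluation applies directly; here one must insert the restriction $j^*:H^1(E(J),\partial E(J);-)\to H^1(E(J);-)$ coming from the long exact sequence of the pair, and verify that $j^*$ is natural with respect to the Bockstein and compatible with $\kappa$ in the way needed to stack the two copies of the diagram together. These are standard naturality checks for the cap product with the relative fundamental class, but they are where the extra care for $\partial E(J)$ enters; once they are in place, the argument of Theorem~\ref{def} transfers essentially verbatim.
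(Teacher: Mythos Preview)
Your proposal is correct and follows essentially the same route as the paper: use Lemma~\ref{E(J)Lemma} and the Alexander polynomial to see that $H_1(E(J);\R)$ is $\R$-torsion, then build the form via the Bockstein and Poincar\'e--Lefschetz duality, inserting the restriction map from the long exact sequence of the pair before applying Kronecker evaluation. The one simplification you miss is that the paper also observes $H_2(E(J);\K\Gamma)=0$ (by Poincar\'e duality, from $H_1(E(J);\K\Gamma)=0$), so that the Bockstein $B$ is an \emph{isomorphism}, not merely a surjection. This lets the paper define $\Bl_\R(J)$ directly as the composition $\kappa\circ\pi^*\circ\text{P.D.}\circ B^{-1}$, bypassing the $\coker\psi$ descent argument from Theorem~\ref{def} entirely. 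Your version still works; it just carries the extra bookkeeping of checking that $\im\psi$ lies in the kernel, which in this case is vacuous because $\psi$ has zero domain.
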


\begin{proof}
We consider the Bockstein sequence:
$$H_2(E(J);\K\Gamma) \rightarrow H_2(E(J);\K\Gamma/\R)
\overset{B}{\rightarrow} H_1(E(J);\R) \rightarrow
H_1(E(J);\K\Gamma).$$ From Lemma \ref{E(J)Lemma}, we have that
$H_1(E(J);\R) \cong H_1(E(J);\Z[t,t^{-1}]) \otimes_{\Z[t,t^{-1}]}
\R$. Since $H_1(E(J);\Z[t,t^{-1}])=\A_0(J)$ is annihilated by the
Alexander polynomial, it follows that $H_1(E(J);\R)$ is a torsion
module. Hence $H_1(E(J);\K\Gamma)=0$, and by Poincar\'{e} duality,
$H_2(E(J);\K\Gamma)=0$. Therefore the map $B$ above is an
isomorphism. We define the linking form $\Bl_\R(J)$ to be the
composition of the following maps:
\begin{eqnarray*}
&&H_1(E(J);\R) \overset{B^{-1}}{\rightarrow}
H_2(E(J);\K\Gamma/\R) \overset{P.D.}{\rightarrow} \overline{H^1(E(J),\partial E(J);\K\Gamma/\R)} \\
&&\hspace{0.5in}\overset{\pi^*}{\rightarrow}
\overline{H^1(E(J);\K\Gamma/\R)} \overset{\kappa}{\rightarrow}
H_1(E(J);\R)^{\#},
\end{eqnarray*}
where $P.D.$ is the Poincar\'{e} duality isomorphism, $\pi^*$ is the
map in the long exact sequence of a pair and $\kappa$ is the
Kronecker evaluation map.
\end{proof}

\pagebreak

We now show that $\Bl_{\R}(J)$ is determined by the classical
Blanchfield linking form on $J$. In the proof of Lemma
\ref{E(J)Lemma}, we considered the following commutative diagram:
$$
\begin{diagram}\dgARROWLENGTH=1em
\node{\Z\pi_1(E(J))} \arrow{e} \arrow{s} \node{\R} \\
\node{\Z[t,t^{-1}]} \arrow{ne,b}{\psi}\\
\end{diagram}
$$

\vspace{-.5in}\noindent Here $\psi: t \mapsto \phi(\eta)$. If $\phi(\eta) \neq 1$, then
$\psi$ and $\overline{\psi}: \Q(t)/\Z[t,t^{-1}] \to \K\Gamma/\R$ are
monomorphisms. Furthermore we have a map $\psi_*: \A_0(J) =
H_1(E(J);\Z[t,t^{-1}]) \to H_1(E(J);\R)$.

\begin{proposition}\label{classicalBl}
If $\phi(\eta) \neq 1$, then  for all $x, y \in \A_0(J)$,
$$\Bl_{\R}(J)(\psi_*(x),\psi_*(y)) = \overline{\psi}\left(\Bl_0(J)(x,y)\right),$$
where $\Bl_0(J)$ is the classical Blanchfield linking form on $J$.
\end{proposition}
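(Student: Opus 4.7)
The plan is to establish the identity by proving that the entire construction of $\Bl_\R(J)$ is natural with respect to the change of coefficients from $\Z[t,t^{-1}]$ to $\R$. Since $\phi(\eta)\neq 1$, the ring homomorphism $\psi:\Z[t,t^{-1}]\to\R$ is injective, and because its image sits inside $\K\Gamma$, it extends uniquely to a ring map $\Q(t)\to\K\Gamma$ and descends to $\overline{\psi}:\Q(t)/\Z[t,t^{-1}]\to\K\Gamma/\R$. These three vertical maps fit into a morphism between the short exact coefficient sequences used to define $\Bl_0(J)$ and $\Bl_\R(J)$, and they commute with the natural involutions, since $\psi(t^{-1})=\phi(\eta)^{-1}=\overline{\psi(t)}$.

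I would then verify that each of the four ingredients in the definition of $\Bl_\R(J)$ is natural under this change-of-coefficients morphism. (i) The Bockstein $B$ is natural in the coefficient sequence, so stacking the two sequences yields a commutative square identifying $B\circ\overline{\psi}_*$ with $\psi_*\circ B_0$; moreover, the classical $B_0:H_2(E(J);\Q(t)/\Z[t,t^{-1}])\to H_1(E(J);\Z[t,t^{-1}])$ is an isomorphism by the same Alexander-polynomial argument used in the definition of $\Bl_\R(J)$. (ii) Poincar\'e--Lefschetz duality for the pair $(E(J),\partial E(J))$ is given by capping with the fundamental class, which commutes with any bimodule coefficient change, so the P.D.\ squares commute. (iii) The connecting map $\pi^{*}$ in the long exact sequence of the pair is natural in the coefficients. (iv) The Kronecker evaluation $\kappa$ is the chain-level pairing, which is evidently natural.

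Stacking these four commutative squares produces one large commutative diagram connecting the definition of the classical form $\Bl_0(J):\A_0(J)\to\overline{\Hom_{\Z[t,t^{-1}]}(\A_0(J),\Q(t)/\Z[t,t^{-1}])}$ to the definition of $\Bl_\R(J)$, with the two vertical boundaries given respectively by $\psi_*$ on the source and by post-composition with $\overline{\psi}$ on the target. Evaluating the resulting equality of homomorphisms on $\psi_*(y)$ and pairing with $\psi_*(x)$ gives exactly $\Bl_\R(J)(\psi_*(x),\psi_*(y))=\overline{\psi}(\Bl_0(J)(x,y))$. The main obstacle is purely bookkeeping: one must track the involutions that convert left $\R$-modules to right $\R$-modules, and verify that the isomorphism $H_1(E(J);\R)\cong \A_0(J)\otimes_{\Z[t,t^{-1}]}\R$ from Lemma \ref{E(J)Lemma} really does intertwine the two pairings. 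Note also that since $H_1(E(J);\R)$ is already torsion (by Lemma \ref{E(J)Lemma} and the fact that the Alexander polynomial annihilates $\A_0(J)$), the restriction map $j^{\#}$ appearing in the definition of $\Bl_\R$ is the identity, so no further care is needed there.
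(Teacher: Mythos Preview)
Your proposal is correct and follows essentially the same route as the paper: both arguments assemble the naturality squares for $B^{-1}$, Poincar\'e duality, $\pi^{*}$, and $\kappa$ under the coefficient change $\psi:\Z[t,t^{-1}]\to\R$ (and its induced map $\overline\psi$ on quotients) into one large commutative diagram whose columns are $\Bl_0(J)$ and $\Bl_\R(J)$, and then read off the identity by evaluating on $x,y\in\A_0(J)$. Your write-up is in fact more explicit than the paper's about \emph{why} each square commutes, but the underlying strategy is identical.
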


\begin{proof}The classical Blanchfield linking form on $J$ is the
composition of the following maps:
\begin{eqnarray*}
&&H_1(E(J);\Z[t,t^{-1}]) \overset{B^{-1}}{\rightarrow}
H_2(E(J);\Q(t)/\Z[t,t^{-1}]) \overset{P.D.}{\rightarrow} \overline{H^1(E(J),\partial E(J);\Q(t)/\Z[t,t^{-1}])} \\
&&\hspace{0.5in}\overset{\pi^*}{\rightarrow}
\overline{H^1(E(J);\Q(t)/\Z[t,t^{-1}])}
\overset{\kappa}{\rightarrow}
\overline{\Hom_{\Z[t,t^{-1}]}(H_1(E(J);\Z[t,t^{-1}]),\Q(t)/\Z[t,t^{-1}]),}
\end{eqnarray*}
where $P.D.$ is the Poincar\'{e} duality isomorphism, $\pi^*$ is the
map in the long exact sequence of a pair and $\kappa$ is the
Kronecker evaluation map. 

\pagebreak

We have the following commutative diagram:
\renewcommand{\dgeverylabel}{\displaystyle}

$$
\hspace{-1.25in}\begin{diagram}\dgARROWLENGTH=1em
\node{H_1(E(J);\Z[t,t^{-1}])} \arrow[2]{e,t}{\psi_*} \arrow{s,l}{B^{-1}} \node{} \node{H_1(E(J);\R)} \arrow{s,l}{B^{-1}}\\
\node{H_2(E(J);\Q(t)/\Z[t,t^{-1}])} \arrow[2]{e,t}{\overline{\psi}_*} \arrow{s,l}{P.D.} \node{} \node{H_2(E(J);\K\Gamma/\R)} \arrow{s,l}{P.D.}\\
\node{\overline{H^1(E(J),\partial E(J);\Q(t)/\Z[t,t^{-1}])}} \arrow[2]{e,t}{\overline{\psi}_*} \arrow{s,l}{\pi^*} \node{} \node{\overline{H^1(E(J),\partial E(J);\K\Gamma/\R)}} \arrow{s,l}{\pi^*}\\
\node{\overline{H^1(E(J);\Q(t)/\Z[t,t^{-1}])}}\arrow[2]{e,t}{\overline{\psi}_*} \arrow{s,l}{\kappa} \node{} \node{\overline{H^1(E(J);\K\Gamma/\R)}} \arrow{s,l}{\kappa}\\
\node{\overline{\Hom_{\Z[t,t^{-1}]}(H_1(E(J);\Z[t,t^{-1}]),\Q(t)/\Z[t,t^{-1}])}} \arrow{se,l}{\overline{\psi}_{\#}} \node{} \node{\overline{\Hom_{\R}(H_1(E(J);\R),\K\Gamma/\R)}} \arrow{sw,l}{\psi^*}\\
\node{} \node{\overline{\Hom_{\R}(H_1(E(J);\Z[t,t^{-1}]),\K\Gamma/\R)}} \node{} \\
\end{diagram}
$$

\vspace{-.5in}The composition of maps in the left column is the classical Blanchfield linking form $\Bl_{0}(J)$, and in the right column is $\Bl_{\R}(J)$.

Since the diagram commutes, $\psi^* \circ \Bl_{\R}(J) \circ \psi_* =
\overline{\psi}_{\#} \circ \Bl_0(J)$. Evaluating these maps on $x, y
\in \A_0(J)$, gives the desired result.
\end{proof}

We now show the relationship between the linking forms $\Bl_{\R}(M)$
and $\Bl_{\R}(M(\eta,J))$ when $\phi(\eta) \neq 1$. In this case, it
follows from Corollary \ref{modules-torsion} that the following is a
split short exact sequence:
$$H_1(E(J);\R) \overset{i_*}{\to} TH_1(M(\eta,J);\R) \overset{f_*}{\to} TH_1(M;\R).$$
If we choose a splitting $g$, we have the following theorem that
relates $\Bl_{\R}(M(\eta,J))$, $\Bl_{\R}(M)$, and $\Bl_{\R}(J)$.

\begin{theorem}
\label{DirectSum}If $\phi(\eta) \neq 1$, then $\Bl_{\R}(M(\eta,J))
\cong \Bl_{\R}(M) \oplus \Bl_{\R}(J)$. That is, for any $x_1,y_1 \in
TH_1(M;\R)$ and $x_2,y_2 \in H_1(E(J);\R)$,
\begin{equation*}
\Bl_{\R}(M)(x_1,y_1) + \Bl_{\R}(J)(x_2,y_2) =
\Bl_{\R}(M(\eta,J))\left(g(x_1) + i_*(x_2), g(y_1) +
i_*(y_2)\right).
\end{equation*}
\end{theorem}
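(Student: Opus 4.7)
The plan is to decompose $TH_1(M(\eta,J);\R) = g(TH_1(M;\R)) \oplus i_*(H_1(E(J);\R))$ via the splitting $g$ and verify the claimed identity on each of the four bilinear pairings. I choose $g$ to be the splitting arising from the Mayer--Vietoris decomposition in the proof of Proposition \ref{modules}: given $x_1 \in TH_1(M;\R)$, lift it to $\hat x_1 \in H_1(M(\eta);\R)$ via the surjection induced by the inclusion $M(\eta) \hookrightarrow M$, and set $g(x_1) = i'_*(\hat x_1)$, where $i':M(\eta) \hookrightarrow M(\eta,J)$. In particular, $g(x_1)$ admits a representing $1$-cycle supported in $M(\eta) \subset M(\eta,J)$.

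For the $(g(x_1), g(y_1))$-diagonal, I apply naturality of the degree-one map $f$, which is the identity on $M(\eta)$. Naturality of Bockstein shows that $f_*(\tilde g(x_1))$ serves as a Bockstein lift of $x_1$. Naturality of Poincar\'e--Lefschetz duality for the pairs $(M(\eta,J), E(J))$ and $(M, E(U))$, together with excision identifying $H_2(M(\eta,J), E(J);\K\Gamma/\R)$ and $H_2(M, E(U);\K\Gamma/\R)$ both with $H_2(M(\eta), \partial M(\eta); \K\Gamma/\R)$, shows that the restrictions of $\text{P.D.}_{M(\eta,J)}(\tilde g(x_1))$ and of $\text{P.D.}_M(f_*(\tilde g(x_1)))$ to $H^1(M(\eta); \K\Gamma/\R)$ agree. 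Since $g(y_1)$ is represented in $M(\eta)$, evaluating the Kronecker pairing gives $\Bl_\R(M(\eta,J))(g(x_1), g(y_1)) = \Bl_\R(M)(x_1, y_1)$.

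For the $(i_*(x_2), i_*(y_2))$-diagonal, I use that the Bockstein on $E(J)$ is an isomorphism, as established in the construction of $\Bl_\R(J)$, so $i_*(x_2)$ has the canonical Bockstein lift $i_*(\tilde x_2)$. Naturality of Poincar\'e--Lefschetz duality for the inclusion of pairs $(E(J), \partial E(J)) \hookrightarrow (M(\eta,J), M(\eta))$, together with the excision $H^1(M(\eta,J), M(\eta);\K\Gamma/\R) \cong H^1(E(J), \partial E(J); \K\Gamma/\R)$, yields a commutative diagram factoring $\text{P.D.}_{M(\eta,J)}(i_*\tilde x_2)$ through $\pi^*\text{P.D.}_{E(J)}(\tilde x_2)$, where $\pi^*$ is the map appearing in the definition of $\Bl_\R(J)$. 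Composing with $\kappa$ and evaluating on $i_*(y_2)$ (using naturality of the Kronecker map) recovers $\Bl_\R(J)(x_2, y_2)$.

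For the cross terms, the same naturality shows that $\text{P.D.}_{M(\eta,J)}(i_*\tilde y_2)$ lies in the image of $H^1(M(\eta,J), M(\eta);\K\Gamma/\R) \to H^1(M(\eta,J); \K\Gamma/\R)$, i.e., comes from a cocycle vanishing on chains supported in $M(\eta)$. Since $g(x_1)$ is represented in $M(\eta)$, this immediately gives $\Bl_\R(M(\eta,J))(i_*(y_2), g(x_1)) = 0$, and the symmetric cross term $\Bl_\R(M(\eta,J))(g(x_1), i_*(y_2))$ then vanishes by Hermitian symmetry of $\Bl_\R$ (or, alternatively, by a parallel argument in which the Bockstein lift $\tilde g(x_1)$ is chosen in the image of $H_2(M(\eta);\K\Gamma/\R) \to H_2(M(\eta,J);\K\Gamma/\R)$). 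The main obstacle I anticipate is this latter alternative route: showing that such a lift exists requires tracking the Mayer--Vietoris connecting homomorphism $H_2(M(\eta,J);\K\Gamma/\R) \to H_1(\partial E(J); \K\Gamma/\R)$ and verifying it can be made to vanish on some choice of Bockstein lift of $g(x_1)$, which is a diagram chase using the Mayer--Vietoris of the coefficient short exact sequence $0 \to \R \to \K\Gamma \to \K\Gamma/\R \to 0$.
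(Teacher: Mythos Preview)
Your argument is essentially correct, but the paper takes a noticeably cleaner route. Rather than fixing the Mayer--Vietoris splitting and working cycle-by-cycle with excision and relative Poincar\'e--Lefschetz duality, the paper first establishes the single global identity
\[
f^{*}\circ \Bl_{\R}(M)\circ f_{*}=\Bl_{\R}(M(\eta,J))
\]
from naturality of the Bockstein, Poincar\'e duality, and Kronecker maps under the degree-one map $f:M(\eta,J)\to M$. Once this is in hand, three of the four blocks fall out formally: composing with $g$ and $g^{\#}$ and using $f_{*}\circ g=\id$, $g^{\#}\circ f^{*}=\id$ gives the $(g,g)$ diagonal, while $f_{*}\circ i_{*}=0$ and $i^{*}\circ f^{*}=0$ kill both cross terms. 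Only the $(i_{*},i_{*})$ diagonal needs the separate naturality diagram for the inclusion $E(J)\hookrightarrow M(\eta,J)$, and there your argument and the paper's coincide.

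Two advantages of the paper's approach are worth noting. First, it works for \emph{any} splitting $g$ of the short exact sequence, since only $f_{*}\circ g=\id$ is used; your argument is tied to the particular geometric splitting through $M(\eta)$. Second, the paper avoids your appeal to Hermitian symmetry of $\Bl_{\R}$ for the second cross term --- a fact that is standard but not actually proved in this paper --- as well as the Bockstein-lift diagram chase you flag as the ``main obstacle.'' On the other hand, your approach has the virtue of making the geometric content more explicit, and it would be the natural one if you were uneasy about invoking naturality of Poincar\'e duality under a degree-one map between closed manifolds.
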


Before giving the proof, we state a corollary that follows
immediately from Proposition \ref{classicalBl} and Theorem
\ref{DirectSum}.

\begin{corollary}If $\phi(\eta) \neq 1$, then for any $x_1,y_1 \in
TH_1(M;\R)$ and $x_2,y_2 \in \A_0(J)$,
\begin{equation*}
\Bl_\R(M)(x_1,y_1) + \overline{\psi}\left(\Bl_0(J)(x_2,y_2)\right) =
\Bl_\R(M(\eta,J))\left(g(x_1) + i_*(\psi_*(x_2)), g(y_1) +
i_*(\psi_*(y_1))\right).
\end{equation*}
\end{corollary}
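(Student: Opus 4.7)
The plan is to expand $\Bl_{\R}(M(\eta,J))$ bilinearly on the decomposition $a=g(x_1)+i_*(x_2)$, $b=g(y_1)+i_*(y_2)$ of $TH_1(M(\eta,J);\R)$, producing four terms, and then to identify the two diagonal terms with $\Bl_{\R}(M)(x_1,y_1)$ and $\Bl_{\R}(J)(x_2,y_2)$ respectively while showing that the two cross terms vanish.

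For the $M$-diagonal term I would reuse the commutative diagram from the proof of Proposition~\ref{lowerforms} applied to the degree-one map $f\colon M(\eta,J)\to M$. The splitting $g$ is a section of $f_{*}$, and chasing elements in the image of $g$ through that diagram yields $\Bl_{\R}(M(\eta,J))(g(x_1))(g(y_1))=\Bl_{\R}(M)(f_{*}g(x_1))(f_{*}g(y_1))=\Bl_{\R}(M)(x_1)(y_1)$.

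The $J$-diagonal term is the step I expect to require the most care, since $\Bl_{\R}(J)$ is defined via Poincar\'e--Lefschetz duality on the pair $(E(J),\partial E(J))$ whereas $\Bl_{\R}(M(\eta,J))$ uses ordinary Poincar\'e duality on the closed manifold $M(\eta,J)$. Let $c\in H_{2}(E(J);\K\Gamma/\R)$ be the unique Bockstein preimage of $x_{2}$; existence and uniqueness are exactly the input used to build $\Bl_{\R}(J)$, since the Bockstein $B\colon H_{2}(E(J);\K\Gamma/\R)\to H_{1}(E(J);\R)$ is an isomorphism in that setting. Naturality of the Bockstein supplies $B(i_{*}c)=i_{*}(x_{2})$, so $i_*c$ is a valid preimage on the $M(\eta,J)$ side. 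The key compatibility is that under excision $(M(\eta,J),M(\eta))\simeq(E(J),\partial E(J))$ the fundamental class of $M(\eta,J)$ restricts to the relative fundamental class of $(E(J),\partial E(J))$, so that $i^{*}\circ P.D.\circ i_{*}=\pi^{*}\circ P.L.D.$ as maps $H_{2}(E(J);\K\Gamma/\R)\to\overline{H^{1}(E(J);\K\Gamma/\R)}$. Combining this with naturality of the Kronecker evaluation $\kappa$ and of $j^{\#}$ under $i$ identifies $\Bl_{\R}(M(\eta,J))(i_{*}x_{2})\circ i_{*}$ with $\Bl_{\R}(J)(x_{2})$ as functionals on $H_{1}(E(J);\R)$, and evaluating at $y_{2}$ yields the desired diagonal contribution.

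The two cross terms vanish by a Mayer--Vietoris argument. Geometrically, $g(x_{1})$ can be represented by a $1$-cycle in the interior of $M(\eta)$ and $i_{*}(y_{2})$ by a cycle in the interior of $E(J)$; a bounding $2$-chain for a multiple of $g(x_{1})$ can be arranged to meet $E(J)$ only in a collar of $\partial E(J)$, so the linking intersection is zero. Algebraically, a Bockstein preimage of $g(x_{1})$ lifts, up to a class coming from $H_{2}(\partial E(J);\K\Gamma/\R)$, to $j_{*}H_{2}(M(\eta);\K\Gamma/\R)$, and therefore its Poincar\'e dual in $\overline{H^{1}(M(\eta,J);\K\Gamma/\R)}$ lies in the image of the relative cohomology sequence of the pair $(M(\eta,J),E(J))$ and restricts to zero under $i^{*}$. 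Naturality of $\kappa$ under $i$ then forces $\Bl_{\R}(M(\eta,J))(g(x_1))(i_{*}y_2)=0$, and the symmetric argument---exchanging the roles of $i\colon E(J)\hookrightarrow M(\eta,J)$ and $j\colon M(\eta)\hookrightarrow M(\eta,J)$---handles the other cross term. Summing the four contributions produces the stated direct-sum identity $\Bl_{\R}(M(\eta,J))\cong\Bl_{\R}(M)\oplus\Bl_{\R}(J)$.
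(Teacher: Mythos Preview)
Your outline establishes the direct-sum identity $\Bl_{\R}(M(\eta,J))\cong\Bl_{\R}(M)\oplus\Bl_{\R}(J)$, but that is the preceding Theorem, not the Corollary. The Corollary goes one step further: on the left the summand is $\overline{\psi}\bigl(\Bl_0(J)(x_2,y_2)\bigr)$ with $x_2,y_2\in\A_0(J)$ and $\Bl_0(J)$ the \emph{classical} Blanchfield form over $\Z[t,t^{-1}]$, and on the right the arguments are $i_*\psi_*(x_2)$, $i_*\psi_*(y_2)$ rather than $i_*(x_2)$, $i_*(y_2)$. Bridging that gap requires the change-of-rings compatibility
\[
\Bl_{\R}(J)\bigl(\psi_*(x_2),\psi_*(y_2)\bigr)=\overline{\psi}\bigl(\Bl_0(J)(x_2,y_2)\bigr),
\]
which in the paper is Proposition~\ref{classicalBl} and is proved by a separate naturality diagram for the Bockstein, Poincar\'e duality, and Kronecker maps under the ring map $\psi\colon\Z[t,t^{-1}]\to\R$. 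Your proposal never mentions $\Bl_0$, $\psi_*$, or $\overline{\psi}$, so as written it does not reach the stated conclusion.

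On the portion you do address, your two diagonal arguments match the paper's. For the cross terms, however, the paper's route is considerably shorter than your Mayer--Vietoris approach: the naturality diagram you already invoked for the $M$-diagonal in fact gives the global identity $\Bl_{\R}(M(\eta,J))=f^{*}\circ\Bl_{\R}(M)\circ f_{*}$ on all of $TH_1(M(\eta,J);\R)$, not just on $\im g$. Since the sequence $H_1(E(J);\R)\xrightarrow{i_*}TH_1(M(\eta,J);\R)\xrightarrow{f_*}TH_1(M;\R)$ is exact, one has $f_*\circ i_*=0$ and $i^*\circ f^*=0$, and both cross terms vanish in one line. Your geometric argument is plausible but the step ``a Bockstein preimage of $g(x_1)$ lifts, up to a class from $H_2(\partial E(J);\K\Gamma/\R)$, to $j_*H_2(M(\eta);\K\Gamma/\R)$'' needs real justification, and all of it is unnecessary once you use the factorization through $f$.
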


From Corollary \ref{modules-torsion}, we know that every element in
$TH_1(M(\eta,J);\R)$ can be written as $g(x_1) + i_*(\psi_*(x_2))$
for some $x_1 \in TH_1(M;\R)$ and $x_2 \in \A_0(J)$. Hence the
corollary above shows that the linking form on $M(\eta,J)$ is
completely determined by the linking form on $M$ and the classical
Blanchfield linking form on $J$. We now prove Theorem
\ref{DirectSum}.

\begin{proof}
We have the following diagram.
\renewcommand{\dgeverylabel}{\displaystyle}

$$
\begin{diagram} 
\node{H_1(E(J);\R)} \arrow{e,t}{i_*} \arrow{s,r}{\Bl_{\R}(J)} \node{TH_1(M(\eta,J);\R)}  \arrow{s,r}{\Bl_{\R}(M(\eta,J))} \node{TH_1(M;\R)} \arrow{w,t}{g} \arrow{s,r}{\Bl_{\R}(M)}\\
\node{H_1(E(J);\R)^{\#}}  \node{TH_1(M(\eta,J);\R)^{\#}} \arrow{w,t}{i^*} \arrow{e,t}{g^{\#}} \node{TH_1(M;\R)^{\#}} \\
\end{diagram}
$$

\vspace{-.5in}\noindent where $g^{\#}$ is the dual of $g$. Notice that since $f_* \circ g =
\id$, it follows that $g^{\#} \circ f^* = \id$. The isomorphism in
the theorem will be given by $i_* \oplus g$. Hence the theorem will
follow from the following four claims.

\begin{enumerate}
\item $i^* \circ \Bl_{\R}(M(\eta,J)) \circ
i_* = \Bl_{\R}(J)$ which establishes: \\
$$\Bl_{\R}(M(\eta,J))(i_*(x_1),i_*(y_1)) = \Bl_{\R}(J) (x_1,y_1).$$

\item $g^{\#} \circ \Bl_{\R}(M(\eta,J)) \circ g =
\Bl_{\R}(M)$ which establishes: \\
$$\Bl_{\R}(M(\eta,J)) (g(x_2),g(y_2)) = \Bl_{\R}(M) (x_2,y_2).$$

\item $g^{\#} \circ \Bl_{\R}(M(\eta,J)) \circ i_* = 0$ which establishes: \\
$$\Bl_{\R}(M(\eta,J)) (i_*(x_1),g(y_2)) = 0.$$

\item $i^* \circ \Bl_{\R}(M(\eta,J)) \circ g = 0$
which establishes: \\
$$\Bl_{\R}(M(\eta,J)) (g(x_2),i_*(y_1)) = 0.$$
\end{enumerate}

\pagebreak

The first claim follows immediately from the following commutative diagram.
\renewcommand{\dgeverylabel}{\displaystyle}

$$
\hspace{-.5in}\begin{diagram}\dgARROWPARTS=5
\node{H_2(E(J);\K\Gamma/\R)} \arrow[4]{e,t,4}{i_*} \arrow{s,l}{P.D.} \arrow[3]{se,t}{B}\node{} \node{} \node{} \node{H_2(M(\eta,J);\K\Gamma/\R)} \arrow{sssee,t}{B} \arrow[2]{s,l}{P.D.} \node{} \node{} \\
\node{\overline{H^1(E(J),\partial E(J);\K\Gamma/\R)}} \arrow{s,l}{\pi^*} \node{} \node{} \node{} \node{} \node{} \node{} \\
\node{\overline{H^1(E(J);\K\Gamma/\R)}} \arrow[3]{s,l}{\kappa} \node{} \node{} \arrow[2]{w} \node{} \node{\overline{H^1(M(\eta,J);\K\Gamma/\R)}} \arrow[2]{w,t,-}{i^*} \arrow{s,-} \node{} \node{}\\
\node{} \node{} \node{} \node{H_1(E(J);\R)} \arrow[3]{e,t}{i_*} \arrow[3]{sw,b}{\Bl_\R(J)} \node{} \arrow[2]{s,l}{\kappa} \node{} \node{TH_1(M(\eta,J);\R)} \arrow{sssww,b}{\Bl_\R(M(\eta,J))} \\
\node{} \node{} \node{} \node{} \node{} \node{} \node{}\\
\node{\left(H_1(E(J);\R)\right)^{\#}} \arrow{s,l}{\id} \node{} \arrow{w} \node{} \node{} \node{\left(H_1(M(\eta,J);\R)\right)^{\#}} \arrow[3]{w,t,-}{i^*} \arrow{s,l}{j^{\#}} \node{} \node{}\\
\node{\left(H_1(E(J);\R)\right)^{\#}} \node{} \node{} \node{} \node{\left(TH_1(M(\eta,J);\R)\right)^{\#}} \arrow[4]{w,t,1}{i^*} \node{} \node{}\\
\end{diagram}
$$

\pagebreak

To prove the second claim, we consider the following commutative
diagram.
\renewcommand{\dgeverylabel}{\displaystyle}
$$
\begin{diagram}\dgARROWPARTS=6
\node{H_2(M(\eta,J);\K\Gamma/\R)} \arrow[3]{e,t,4}{f_*} \arrow[2]{se,t,2}{B} \arrow{s,l}{P.D.} \node{} \node{} \node{H_2(M;\K\Gamma/\R)} \arrow[2]{se,t,2}{B} \arrow{s,l}{P.D.} \node{} \node{}\\
\node{\overline{H^1(M(\eta,J);\K\Gamma/\R)}} \arrow[2]{s,l}{\kappa} \node{} \arrow{w} \node{} \node{\overline{H^1(M;\K\Gamma/\R)}} \arrow[2]{w,t,-}{f^*} \arrow{s,-} \node{} \node{}\\
\node{} \node{} \node{TH_1(M(\eta,J);\R)} \arrow[2]{sw,b,4}{\Bl_\R(M(\eta,J))} \arrow[3]{e,t}{f_*} \node{} \arrow{s,l}{\kappa} \node{} \node{TH_1(M;\R)}  \arrow[2]{sw,b,4}{\Bl_\R(M)}\\
\node{\left(H_1(M(\eta,J);\R)\right)^{\#}} \arrow{s,l}{j^{\#}} \node{} \arrow{w} \node{} \node{\left(H_1(M;\R)\right)^{\#}} \arrow{s,l}{j^{\#}} \arrow[2]{w,t,-}{f^*} \node{} \node{}\\
\node{\left(TH_1(M(\eta,J);\R)\right)^{\#}}  \node{} \node{} \node{\left(TH_1(M;\R)\right)^{\#}} \arrow[3]{w,t}{f^*} \node{} \node{}\\
\end{diagram}
$$

\vspace{-.5in}From the diagram above we have $f^* \circ \Bl_\R(M) \circ f_* =
\Bl_\R(M(\eta,J))$. Therefore,
$$g^{\#} \circ f^* \circ \Bl_\R(M) \circ f_* \circ g = g^{\#} \circ
\Bl_\R(M(\eta,J)) \circ g.$$ Since $f_* \circ g = \id$ and $g^{\#}
\circ f^* = \id$, it follows that $g^{\#} \circ \Bl_\R(M(\eta,J))
\circ g = \Bl_\R(M)$. Hence the second claim is proved.

We have established that we have the following commutative diagram whose rows are exact.
\renewcommand{\dgeverylabel}{\displaystyle}
$$
\begin{diagram} 
\node{H_1(E(J);\R)} \arrow{e,t}{i_*} \arrow{s,r}{\Bl_{\R}(J)} \node{TH_1(M(\eta,J);\R)} \arrow{s,r}{\Bl_{\R}(M(\eta,J))} \arrow{e,t}{f_*} \node{TH_1(M;\R)} \arrow{s,r}{\Bl_{\R}(M)}  \\
\node{H_1(E(J);\R)^{\#}}  \node{TH_1(M(\eta,J);\R)^{\#}} \arrow{w,t}{i^*} \node{TH_1(M;\R)^{\#}} \arrow{w,t}{f^*}    \\
\end{diagram}
$$

\vspace{-.5in}Since $f^* \circ \Bl_{\R}(M) \circ f_* = \Bl_{\R}(M(\eta,J))$, it
follows that $$g^{\#} \circ \Bl_{\R}(M(\eta,J)) \circ i_* = g^{\#}
\circ f^* \circ \Bl_{\R}(M) \circ f_* \circ i_*$$ $$i^* \circ
\Bl_{\R}(M(\eta,J)) \circ g = i^* \circ f^* \circ \Bl_{\R}(M) \circ
f_* \circ g$$

But since the rows are exact, $f_* \circ i_* = 0$ and $i^* \circ f^*
= 0$. Therefore $g^{\#} \circ \Bl_{\R}(M(\eta,J)) \circ i_* = 0$ and
$i^* \circ \Bl_{\R}(M(\eta,J)) \circ g = 0$.
\end{proof}

\bibliographystyle{plain}
\bibliography{3-mfld}

\end{document}